\makeatletter\@addtoreset{equation}{section} \makeatother
\newtheorem{thm}{Theorem}[section]
\newtheorem{rem}{Remark}[section]
\newtheorem{defn}{Definition}[section]
\makeatletter \setlength{\parindent}{2em}
\title{ Remarks on Nonlinear Elastic Waves in the Radial Symmetry in 2-D}
\author{ Dongbing Zha\thanks{ School of Mathematical Sciences, Fudan University, Shanghai 200433, PR China.{ E-mail address: ZhaDongbing@fudan.edu.cn}.     }}
\begin{document}

\maketitle
\begin{abstract}
In this manuscript we first give the explicit variational structure of the nonlinear elastic waves for isotropic, homogeneous, hyperelastic materials in 2-D. Based on this variational structure, we suggest a null condition which is a kind of structural condition
on the nonlinearity in order to stop the formation of finite time singularities
of local smooth solutions. In the radial symmetric case, inspired by Alinhac's work \cite{Alinhac01} on 2-D quasilinear wave equations, we show that such null condition can ensure the global existence of smooth solutions with small initial data.\\
\emph{keywords}: Nonlinear elastic waves; 2-D; variational structure; null condition; global existence.\\
\emph{2010 MSC}: 35L52; 35Q74
\end{abstract}
\pagestyle{plain} \pagenumbering{arabic}

\section{ Introduction  }
For elastic materials, the motion for the displacement is governed by the nonlinear elastic wave equations which is a second-order quasilinear hyperbolic system. For isotropic, homogeneous, hyperelastic materials, the motion for the displacement $u=u(t,x)$ satisfies
\begin{align}\label{elastic}
\partial_t^2u-c_2^2\Delta u-(c_1^2-c_2^2)\nabla\nabla\cdot u=N(\nabla u,\nabla^2u),
 \end{align}
 where the nonlinear term $N(\nabla u,\nabla^2u)$ is linear in $\nabla^2u$. Some physical backgrounds of the nonlinear elastic waves can be found in
 Ciarlet \cite{MR936420} and Gurtin \cite{Gurtin81}. Here the main concern for us is the problem of long time existence of smooth solutions for \eqref{elastic}, which can trace back to Fritz John's pioneering work on elastodynamics(see~Klainerman \cite{Klainerman98}).
\par
In the 3-D case, John \cite{John84} proved that in the radial
symmetric case, a genuine nonlinearity condition will lead to the formation of
singularities for small initial data. John also \cite{John88} showed that the equations have almost global smooth solutions
for small initial data(see also a simplified proof in Klainerman and~Sideris \cite{Klainerman96}). Then~Sideris \cite{Sideris00}
proved that for certain classes of materials that satisfy a null condition, there exist global smooth solutions with small initial data(see also previous result in Sideris \cite{Sideris96}). Agemi \cite{Agemi00} also suggested a kind of null condition and established the global existence result under such null condition independently. The null condition suggested in Agemi \cite{Agemi00} is equivalent with the one in~Sideris \cite{Sideris00}, and is the complement of the genuine nonlinearity condition given by John \cite{John84}(see Sideris \cite{Sideris00} and Xin \cite{MR1930184}). For large initial data,  Tahvildar-Zadeh \cite{MR1648985} proved that singularities will always form no matter whether the null condition holds or not.
 \par
 In this manuscript, we will consider the 2-D case. The objective of this manuscript is twofold. The first one is to give the explicit variational structure of the nonlinear elastic waves in 2-D, and suggest a null condition on the nonlinearity based on the variational structure. The other is to prove, for radial symmetric and small initial data, such null condition can ensure the global existence of smooth solutions of the Cauchy problem for nonlinear elastic waves in 2-D. To achieve this goal, we will use the global existence result of Alinhac on 2-D quasilinear wave equations with null condition in \cite{Alinhac01}.
 \par
 An outline of this paper is as follows. The main theorem on global existence is stated and proved in Sect. 4, after characterization
of the nonlinear term by the null condition in Sect. 3.
The derivation of null condition is based on the variational structure of the nonlinear elastic waves in 2-D which is given in Sect. 2. Some related remarks are given in Sect. 5.

\section{Nonlinear elastic waves in 2-D}
Consider a homogeneous elastic material
filling in the whole space $\mathbb{R}^2$. Assume that its density in its undeformed state is unity. Let $y: \mathbb{R}\times \mathbb{R}^{2}\longrightarrow \mathbb{R}^{2}$ be the smooth deformation of the material that evolves with time, which is an orientation preserving
diffeomorphism taking a material point $x\in \mathbb{R}^2$ in the reference configuration
to its position $y(t,x)\in \mathbb{R}^2$ at time $t$. The deformation gradient is then the matrix $F=\nabla y$ with components
$F_{il}=\partial_{l}y^{i}$, where the spatial gradient will be denoted by $\nabla$.\par
For the
materials under consideration, the potential energy density is characterized
by a stored energy function $W(F)$. Then we have the Lagrangian
 \begin{align}\label{lagrange}
 \mathscr{L}(y)=\iint \frac{1}{2}|y_t|^2-W(\nabla y)~ dxdt.
\end{align}
A material is frame indifferent,
respectively, isotropic if the conditions
\begin{align}\label{frame}
W(F)=W(QF)~~\text{and}~~ W(F)=W(FQ)~~
\end{align}
hold for every orthogonal matrix $Q$.
It is well-known that \eqref{frame} implies
that the stored energy function $W(F)=\overline{\sigma}(\iota_1,\iota_2)$, where~$\iota_1,\iota_2$ are principal
invariants of the (left) Cauchy--Green strain tensor~$FF^{T}$.
By applying Hamilton's principle to \eqref{lagrange}, we can get the corresponding Euler--Lagrange equation
\footnote{Repeated indices are always summed.}
\begin{align}\label{waver}
\frac{\partial^2 y^i}{\partial t^2}-\frac{\partial}{\partial x^{l}}\big(\frac{\partial W}{\partial F_{il}}(\nabla y)\big)=0.
\end{align}
\par
We will consider displacements $u(t,x)=y(t,x)-x$ from the
reference configuration. The displacement gradient matrix $G=\nabla u$ satisfies $G=F-I$, and~$C=FF^{T}-I=G+G^{T}+GG^{T}$. Consequently we have
\begin{align}
 W(F)={\sigma}(k_1, k_2),
 \end{align}
where~$k_1, k_2$ are principal
invariants of $C$. For the displacement, we have the~Lagrangian
\begin{align}\label{lagrange2}
\widetilde{\mathscr{L}}(u)= \mathscr{L}(y)=\iint \frac{1}{2}|u_t|^2-{\sigma}(k_1, k_2)~ dxdt.
 \end{align}
Then the PDE's can be formulated as the nonlinear system
\begin{align}\label{waverr}
\frac{\partial^2 u^i}{\partial t^2}-\frac{\partial}{\partial x^{l}}\frac{\partial \sigma}{\partial G_{il}}=0.
 \end{align}
 \par
 Now in order to give the variational structural of the nonlinear elastic waves, we need to represent~${\sigma}(\kappa_1,\kappa_2)$ by~$G=\nabla u$ explicitly. We will consider only small displacements from the
reference configuration. In two space dimensions, the global existence of small
amplitude solutions to nonlinear hyperbolic systems hinges on the specific form
of the quadratic and cubic portion of the nonlinearity in relation to the linear part(see for example, Alinhac \cite{Alinhac01}). Such
compatibility conditions are often referred to as null conditions(see Sect. 3). From the analytical point of view, therefore, it is enough to truncate \eqref{waverr} at fourth order in $u$, the higher order corrections having no influence on the existence of small
solutions. And we will truncate ${\sigma}(k_1, k_2)$ in \eqref{lagrange2} at fifth order in $u$.\par
Let~$\lambda_1,\lambda_2$ be the  eigenvalues of $C$. We use the following formula for principal invariants:
\begin{align}
 &k_1=\lambda_1+\lambda_2=\text{tr}~ C,\\
 &k_2=\lambda_1\lambda_2=\text{det}~ C=\frac{(\text{tr}~ C)^2-\text{tr}~ C^2}{2}.
 \end{align}
Noting that¡¡
\begin{align}
 \text{tr}~ C&=2\text{tr}~ G+\text{tr}~ GG^{T},\\
 (\text{tr}~ C)^2&=4(\text{tr}~ G)^2+4\text{tr}~ G~\text{tr}~ GG^{T}+(\text{tr}~ GG^{T})^2,\\
 \text{tr}~ C^2&=2(\text{tr}~ G^2+\text{tr}~ GG^{T})+4\text{tr}~ G^2G^{T}+\text{tr}~ (GG^{T})^2,
 \end{align}
we see that
 \begin{align}\label{O}
 k_1&=2\text{tr}~ G+\text{tr}~ GG^{T},\\\label{O1}
k_2&=2(\text{tr}~ G)^2-(\text{tr}~ G^2+\text{tr}~ GG^{T})
+2(\text{tr}~ G~\text{tr}~ GG^{T}-\text{tr}~ G^2G^{T})\nonumber\\
&+\frac{1}{2}\big((\text{tr}~ GG^{T})^2-\text{tr}~ (GG^{T})^2\big).
 \end{align}
 From \eqref{O} and \eqref{O1} it is apparent that $k_1=\mathscr{O}(|G|), k_2=\mathscr{O}(|G|^2)$.
Therefore, the relevant terms in the Taylor expansion of $k_i=0$ are
 \begin{align}\label{zhan}
 \sigma(k_1,k_2)&=(\sigma_0+\sigma_1 k_1)
 +(\frac{1}{2}\sigma_{11}k_1^2+\sigma_2k_2)+(\frac{1}{6}\sigma_{111}k_1^3+\sigma_{12}k_1k_2)\nonumber\\
 &
 +(\frac{1}{24}\sigma_{1111}k_1^4+\frac{1}{2}\sigma_{112}k_1^2k_2+\frac{1}{2}\sigma_{22}k_2^2)+h.o.t.,
 \end{align}
 with $h.o.t.$ denoting higher order terms, and the constants $\sigma_0,\sigma_1$ etc., standing for the partial derivatives of $ \sigma$ at $k_i=0$. Without loss of generality, we assume that $\sigma_0=0$. And we impose the condition $\sigma_1=0$, which implies that the reference configuration
is a stress-free state. Denote
 \begin{align}\label{zhan1}
\sigma(k_1,k_2)&=l_2(G)+l_3(G)+l_4(G)+\mathscr{O}(|G|^5),
\end{align}
where~$l_{i}(G)(i=2,3,4)$ represents the~homogeneous $i-$th order part of~$ \sigma(k_1,k_2)$ with respect to~$G=\nabla u$.
By \eqref{O} and \eqref{O1},  after a bit of calculation, we see that
 \begin{align}\label{l2}
 l_2(G)&=2(\sigma_{11}+\sigma_2)(\text{tr}~ G)^2-\sigma_{2}(\text{tr}~ G^2+\text{tr}~ GG^{T}),\\\label{l3}
 l_3(G)&=2(\sigma_{11}-\sigma_{12})\text{tr}~ G~\text{tr}~ GG^{T}+2\sigma_2 (\text{tr}~ G~\text{tr}~ GG^{T}-\text{tr}~ G^2G^{T})\nonumber\\
 &+(\frac{4}{3}\sigma_{111}+4\sigma_{12})(\text{tr}~ G)^3-2\sigma_{12}\text{tr}~ G ~\text{tr}~ G^2,\\\label{l4}
 l_4(G)&=\frac{1}{2}\sigma_{11}(\text{tr}~ GG^{T})^2+\frac{1}{2}\sigma_{2}\big((\text{tr}~ GG^{T})^2-\text{tr}~ (GG^{T})^2\big)\nonumber\\
 &+2(\sigma_{111}+\sigma_{12})(\text{tr}~ G)^2~\text{tr}~ GG^{T}
+4\sigma_{12}(\text{tr}~ G)\big(\text{tr}~ G~\text{tr}~ GG^{T}-\text{tr}~ G^2G^{T} )\nonumber\\
 &-\sigma_{12}\text{tr}~ GG^{T}(\text{tr}~ GG^{T}+\text{tr}~ G^2)
+(\frac{2}{3}\sigma_{1111}+4\sigma_{112}+2\sigma_{22})(\text{tr}~ G)^4\nonumber\\
 &-2(\sigma_{112}+\sigma_{22})(\text{tr}~ G)^2(\text{tr}~ G^2+\text{tr}~ GG^{T})
+\frac{1}{2}\sigma_{22}(\text{tr}~ G^2{\color{blue}{+}} \text{tr}~ GG^{T})^2.
\end{align}
\par
Our task now is to represent~$l_{i}(G)(i=2,3,4)$ via $G=\nabla u$ explicitly. Denote the null forms
\begin{align}
Q_{ij}(v,w)=\partial_i v\partial_j w-\partial_i w \partial_{j} v,~1\leq i,j\leq 2.
\end{align}
First it is easy to see that
\begin{align}\label{l21}
\text{tr}~ G&=\nabla \cdot u,\\\label{l22}
\text{tr}~ G^2&=(\nabla \cdot u)^2-2Q_{12}(u^1,u^2),\\\label{l23}
\text{tr}~ GG^{T}&=|\nabla u|^2=(\nabla \cdot u)^2+(\nabla^{\bot} \cdot u)^2-2Q_{12}(u^1,u^2),
\end{align}
 where~$\nabla^{\bot }=(\partial_2, -\partial_1)$.
It follows from~\eqref{l21}, \eqref{l22} and~\eqref{l23} that
 \begin{align}\label{L2}
 l_2(\nabla u)=(2\sigma_{11}+\sigma_2)(\nabla \cdot u)^2-\sigma_{2}|\nabla u|^2+2\sigma_{2}Q_{12}(u^1,u^2).
 \end{align}
Next we compute~$l_3(\nabla u)$. According to~\eqref{l21} and~\eqref{l23}, we can get
\begin{align}\label{l2221}
\text{tr}~ G~ \text{tr}~ GG^{T}=(\nabla \cdot u)^3+(\nabla \cdot u)(\nabla^{\bot} \cdot u)^2-2(\nabla \cdot u) Q_{12}(u^1,u^2).
 \end{align}
 We can also show that
 \begin{align}\label{curll}
 \text{tr}~ G~\text{tr}~ GG^{T}-\text{tr}~ G^2G^{T}
 &=\partial_i u^{i}\partial_k u^{j}\partial_k u^{j}-\partial_i
  u^{j}\partial_{k}u^{i}\partial_{k}u^{j}
  =\partial_k u^{j}(\partial_i u^{i}\partial_k u^{j}-\partial_i
  u^{j}\partial_{k}u^{i})\nonumber\\
 & =\partial_k u^{j}Q_{ik}(u^{i},u^{j})
  =(\nabla\cdot u)~ Q_{12}(u^1,u^2).
 \end{align}
 Since~\eqref{l21} and~\eqref{l22}, it follows that
 \begin{align}\label{l22212}
 (\text{tr}~ G)^3&=(\nabla \cdot u)^3,\\\label{yi}
\text{tr}~ G ~\text{tr}~ G^2&=(\nabla \cdot u)^3-2(\nabla \cdot u) Q_{12}(u^1,u^2).
 \end{align}
So it is a consequence of~\eqref{l2221}--\eqref{yi} that
 \begin{align}\label{sanjie}
 l_3(\nabla u)&=d_1(\nabla \cdot u)^3+d_2(\nabla \cdot u)(\nabla^{\bot} \cdot u)^2+d_3(\nabla \cdot u)Q_{12}(u^1,u^2),
 \end{align}
 where
\begin{align}
 \left \{
\begin{array}{llll}
d_1=2\sigma_{11}+\frac{4}{3}\sigma_{111},\\
 d_2=2(\sigma_{11}-\sigma_{12}),\\
 d_3=2(-2\sigma_{11}+4\sigma_{12}+\sigma_2).
\end{array} \right.
 \end{align}
Finally we consider~$ l_4(\nabla u)$. By~\eqref{l23}, we get that
 \begin{align}\label{l22222}
 (\text{tr}~ GG^{T})^2
 =& (\nabla \cdot u)^4+(\nabla^{\bot} \cdot u)^4+4Q^2_{12}(u^1,u^2)+2(\nabla \cdot u)^2(\nabla^{\bot} \cdot u)^2\nonumber\\
 &-4(\nabla \cdot u)^2Q_{12}(u^1,u^2)-4(\nabla^{\bot} \cdot u)^2Q_{12}(u^1,u^2).
 \end{align}
It can be shown that
 \begin{align}\label{L42}
&(\text{tr}~ GG^{T})^2-\text{tr}~ (GG^{T})^2
=\partial_j u^{i}\partial_j u^{i}\partial_l u^{k }\partial_l u^{k }-\partial_j u^{i}\partial_j u^{k}\partial_l u^{k}\partial_{l}u^{i}\nonumber\\
&=\partial_j u^{i}\partial_l u^{k}(\partial_j u^{i}\partial_l u^{k }-\partial_j u^{k}\partial_{l}u^{i})
=\partial_j u^{i}\partial_l u^{k} Q_{jl}(u^{i},u^{k})\nonumber\\
&=2Q_{12}^{2}(u^{1},u^{2}).
 \end{align}
It follows from~\eqref{l21} and~\eqref{l23} that
 \begin{align}\label{L43}
 (\text{tr}~ G)^2~\text{tr}~ GG^{T}
 = (\nabla \cdot u)^4+(\nabla \cdot u)^2(\nabla^{\bot} \cdot u)^2-2(\nabla \cdot u)^2 Q_{12}(u^1,u^2).
 \end{align}
Due to~\eqref{l21} and~\eqref{curll}, we can see that
 \begin{align}\label{L44}
 (\text{tr}~ G)\big(\text{tr}~ G~\text{tr}~ GG^{T}-\text{tr}~ G^2G^{T} )=(\nabla\cdot u)^2~ Q_{12}(u^1,u^2).
 \end{align}
 By~\eqref{l21},\eqref{l22} and~\eqref{l23}, we have that
 \begin{align}\label{L45}
 (\text{tr}~ G^2)~\text{tr}~ GG^{T}
 &= (\nabla \cdot u)^4+4Q^2_{12}(u^1,u^2)+(\nabla \cdot u)^2(\nabla^{\bot} \cdot u)^2\nonumber\\
 &~~-4(\nabla \cdot u)^2Q_{12}(u^1,u^2)-2(\nabla^{\bot} \cdot u)^2Q_{12}(u^1,u^2),\\\label{L46}
 (\text{tr}~ G)^2~\text{tr}~ G^2
 &= (\nabla \cdot u)^4-2(\nabla \cdot u)^2Q_{12}(u^1,u^2),\\\label{L47}
(\text{tr}~ G^2)^2&=(\nabla \cdot u)^4+4Q^2_{12}(u^1,u^2)-4(\nabla \cdot u)^2Q_{12}(u^1,u^2).
 \end{align}
 So it is a consequence of~~\eqref{l22222}--\eqref{L47} that
 \begin{align}\label{sijie}
 l_4(\nabla u)
 &=e_1(\nabla\cdot u)^4+e_2(\nabla^{\bot} \cdot u)^4+e_3(\nabla \cdot u)^2(\nabla^{\bot} \cdot u)^2+e_4Q^2_{12}(u^1,u^2)\nonumber\\
 &~~~+e_5(\nabla \cdot u)^2Q_{12}(u^1,u^2)+e_6(\nabla^{\bot} \cdot u)^2Q_{12}(u^1,u^2),
 \end{align}
 where
 \begin{align}
 \left \{
\begin{array}{llll}
e_1=\frac{1}{2}\sigma_{11}+2\sigma_{111}+\frac{2}{3}\sigma_{1111},\\
e_2=\frac{1}{2}\sigma_{11}-\sigma_{12}+\frac{1}{2}\sigma_{22},\\
e_3=\sigma_{11}-\sigma_{12}+2\sigma_{111}-2\sigma_{112},\\
e_4=2\sigma_{11}-8\sigma_{12}+8\sigma_{22}+\sigma_2,\\
e_5=2(-\sigma_{11}+4\sigma_{12}-2\sigma_{111}+4\sigma_{112}),\\
e_6=2(-\sigma_{11}+3\sigma_{12}-2\sigma_{22}).
\end{array} \right.
 \end{align}
 \par
From~\eqref{lagrange2}, \eqref{zhan1}, \eqref{L2}, \eqref{sanjie}, \eqref{sijie}, by Hamilton's principle we get the nonlinear elastic wave equations in 2-D:
\begin{align}\label{fei}
\partial^2_t u-c_2^2 \triangle u-(c_1^2-c_2^2)\nabla \nabla\cdot u =N_2(\nabla u, \nabla^2 u)+N_3(\nabla u, \nabla^2 u).
\end{align}
The material constants $c_1$ and $c_2$ ($c_1 > c_2>0$) correspond to the speed of pressure wave and shear wave, respectively. We also have
\begin{align}
c_2^2=-2 \sigma_2, c_1^2=4 \sigma_{11}.
\end{align}
The quadratic term
\begin{align}\label{N2}
N_2(\nabla u, \nabla^2 u)&=3d_1\nabla (\nabla\cdot u)^2+d_2\big( \nabla (\nabla^{\bot} \cdot u)^2+2\nabla^{\bot}(\nabla\cdot u~ \nabla^{\bot} \cdot u) \big)\nonumber\\
&+Q( u, \nabla u),
\end{align}
where
\begin{align}
Q( u, \nabla u)=d_3\nabla Q_{12}( u^{1}, u^{2})+d_3\big(Q_{12}(\nabla\cdot u, u^{2}),Q_{12}(u^{1}, \nabla\cdot u) \big).
\end{align}
And the cubic term
\begin{align}\label{N3}
N_3(\nabla u, \nabla^2 u)&=4e_1 \nabla (\nabla\cdot u)^3+4e_2\nabla^{\bot}(\nabla^{\bot} \cdot u)^3\nonumber\\
&+2e_3\big( \nabla( (\nabla\cdot u)(\nabla^{\bot} \cdot u)^2 )+\nabla^{\bot}(
 (\nabla^{\bot}\cdot u)(\nabla \cdot u)^2 )\big )\nonumber\\
 &+\widetilde{{Q}}( u, \nabla u),
\end{align}
where
\begin{align}\label{N4}
\widetilde{{Q}}( u, \nabla u)&=2e_4\Big(Q_{12}\big(Q_{12}(u^1,u^2), u^2\big),Q_{12}\big(u^1,Q_{12}(u^1,u^2)\big) \Big)\nonumber\\
&+2e_5\nabla\big((\nabla\cdot u)Q_{12}(u^1,u^2)\big)\nonumber\\
&+e_5\Big(Q_{12}\big((\nabla\cdot u)^2, u^{2}\big), Q_{12}\big(u^{1},(\nabla\cdot u)^2 \big)\Big)\nonumber\\
&+2e_6\nabla^{\bot}\big((\nabla^{\bot}\cdot u)Q_{12}(u^1,u^2)\big)\nonumber\\
&+e_6\Big(Q_{12}\big((\nabla^{\bot}\cdot u)^2, u^{2}\big),Q_{12}\big(u^{1},(\nabla^{\bot}\cdot u)^2 \big)\Big).
\end{align}
\par

\begin{rem}\label{remmm}
The nonlinear terms in the equation \eqref{fei} can be also represented as follows. For the quadratic term,
\begin{align}
N^{(i)}_2(\nabla u, \nabla^2 u)=B^{ijk}_{lmn}\partial_{l}(\partial_m u^{j}\partial_nu^{k}), i=1,2,
 \end{align}
 and for the cubic term,
\begin{align}
N^{(i)}_3(\nabla u, \nabla^2 u)=B^{ijkp}_{lmnq}\partial_{l}(\partial_m u^{j}\partial_nu^{k}\partial_qu^{p}),i=1,2.
 \end{align}
Here
\begin{align}
 B^{ijk}_{lmn}=\frac{1}{2}\frac{\partial^3W}{\partial F_{il}\partial F_{jm}\partial F_{kn}}(I),~~
B^{ijkp}_{lmnq}=\frac{1}{6}\frac{\partial^4W}{\partial F_{il}\partial F_{jm}\partial F_{kn}\partial F_{pq}}(I),
\end{align}
and the following symmetry condition holds
 \begin{align}\label{sym1232}
B^{ijk}_{lmn}=B^{jik}_{mln}=B^{kji}_{nml},~~
B^{ijkp}_{lmnq}=B^{jikp}_{mlnq}= B^{kjip}_{nmlq}= B^{pjki}_{qmnl}.
 \end{align}
 We can also know that~$\{B^{ijk}_{lmn}\}$ is an isotropic six-order tensor and $\{B^{ijkp}_{lmnq}\}$ is an isotropic eight-order tensor thanks to~\eqref{frame}.
\end{rem}
\section{The null condition}
For quasilinear hyperbolic systems such as the nonlinear elastic wave equations, local smooth solution in general will develop singularities such as shock waves even for small enough initial data. So a nature problem is if we can put some structural condition on the
nonlinearity to ensure the global existence of smooth solution at least for small initial data.  The pioneering work in this aspect
belongs to Sergiu Klainerman. In Klainerman \cite{Klainerman82}, for quasilinear wave equations he identified such structural condition which is called \lq\lq null condition". Under such null condition, the global existence of smooth solutions of 3-D quasilinear wave equations was proved by Christodoulou \cite{Christodoulou86} and~Klainerman \cite{Klainerman86} independently. It is worth noting that in the 3-D case, the time decay of the linear system is $(1+t)^{-1}$, so we should only put the null condition on the quadratic term in the equation. In the 2-D case, since the slow time decay $(1+t)^{-\frac{1}{2}}$ of the linear system, we should put the null condition not only on the quadratic but also on the cubic term in the equation.
The 2-D case is more difficult and was solved in Alinhac \cite{Alinhac01}. For some earlier results in 2-D case, we refer the reader to Godin \cite{MR1218523}, Hoshiga \cite{MR1325960} and Katayama \cite{MR1256442}. Some different concepts of null condition can be found in John \cite{John90}, H\"{o}rmander \cite{Hormander2} and Alinhac \cite{MR1339762}. For 3-D nonlinear elastic waves, Agemi \cite{Agemi00} and~Sideris \cite{Sideris00} suggested the corresponding null condition, and proved the global existence of small smooth solutions.
 \par
 In the remainder of this section, for nonlinear systems with variational structure we will give a new kind of null condition which was first suggested
 in Zhou \cite{Zhou}. Then for the nonlinear elastic waves in 2-D, the corresponding null condition will be derived.
\par
 Suppose that the nonlinear system under consideration admits a variational
structure:
  \begin{align}\label{Lag}
 \mathscr{L}(\phi)=\iint l(\phi,\partial \phi)dxdt,
 \end{align}
where~$\mathscr{L}$ is the Lagrangian, and $l$ is the Lagrangian density. Then the nonlinear system is the Euler-Lagrangian equation of
\eqref{Lag}:
 \begin{align}\label{nonlinear}
 F(\phi,\partial \phi,\partial^2 \phi)=0.
   \end{align}
For smooth~$l$ and~$F$, by the~Taylor expansion we have that near the origin,
\begin{align}
l(\xi)&=l_2(\xi)+l_{3}(\xi)+l_{4}(\xi)+\mathscr{O}(|\xi|^{5}),\\
F(\eta)&=F_1(\eta)+ F_2(\eta)+F_3(\eta)+ \mathscr{O}(|\eta|^{4}). \end{align}
It is easy to see that
 for $ i=1,2,3$,
\begin{align}
F_i(\phi,\partial \phi,\partial^2 \phi)=0
\end{align}
 is the Euler--Lagrange equation of
 \begin{align}
 \mathscr{L}_{i+1}(\phi)= \iint l_{i+1}(\phi,\partial \phi)dxdt.
 \end{align}
 Consider the plane wave solutions of the linearized equation~$F_1(\phi,\partial \phi,\partial^2 \phi)=0$. Denote by~$\mathscr{P}$ the set of all plane wave solutions, i.e.,
 \begin{align}\label{plane}
\mathscr{P}=&\Big\{\phi(t,x)=\varphi(at+\omega\cdot x):~F_1(\phi,\partial \phi,\partial^2 \phi)=0, \nonumber\\
 &~\varphi(0)=0, \varphi'(0)=0, a\in \mathbb{R}, |\omega|=1 \Big\}.
  \end{align}
We give the following concept of null condition of \eqref{nonlinear}.
 \begin{defn}\label{DEFF}
We say that the nonlinear system~\eqref{nonlinear} satisfies the first null condition, if
\begin{align}
l_3(\phi,\partial \phi)=0,~~\forall~\phi\in \mathscr{P};
\end{align}
and \eqref{nonlinear} satisfies the second null condition, if
\begin{align}\label{nullnull2}
l_{4}(\phi,\partial \phi)=0,~~\forall~\phi\in \mathscr{P}.
\end{align}
\end{defn}
\par
Now for nonlinear elastic waves in 2-D~\eqref{fei}, we will derive the null condition according to Definition \ref{DEFF}.
First we must give all plane wave solutions of the linearized system
\begin{align}\label{linear}
\partial^2_t u-c_2^2 \triangle u-(c_1^2-c_2^2)\nabla \nabla\cdot u=0.
\end{align}
 Suppose that~$u(t,x)=\varphi(at+\omega\cdot x)( a\in \mathbb{R},\omega\in \mathbb{R}^{2}, |\omega|=1, \varphi(0)=0, \varphi'(0)=0)$
is a plane wave solution of~\eqref{linear}, then~$\varphi$ satisfies
 \begin{align}\label{planew}
 (a^2-c_2^2)\varphi''-(c_1^2-c_2^2)\omega(\omega\cdot \varphi'')=0.
 \end{align}
  We can verify that \eqref{planew} is equivalent to
 \begin{align}
 (a^2-c_1^2)\omega\cdot \varphi''=0,
 \end{align}
 and
\begin{align}
 (a^2-c_2^2)\omega^{\bot}\cdot \varphi''=0.
 \end{align}
 Because it can not hold that $\omega\cdot \varphi''$ and $\omega^{\bot}\cdot \varphi''=0$ at the same time(otherwise, we must have $\varphi''=0$, in view of the conditions $\varphi(0)=0, \varphi'(0)=0$, then $\varphi=0$). We have
 \begin{align}
 a^2-c_1^2=0,~\omega^{\bot}\cdot \varphi''=0,
 \end{align}
 or
 \begin{align}
 a^2-c_2^2=0,~\omega\cdot \varphi''=0.
 \end{align}
In the first case, we have $\varphi(at+\omega\cdot x)=\omega \psi_1(c_1t+\omega\cdot x)$, where $\psi_1$ is a scalar function. Similarly, in the second case, we have $\varphi(at+\omega\cdot x)=\omega^{\bot} \psi_2(c_2t+\omega\cdot x)$, where $\psi_2$ is a scalar function. By the above discussion, we know that~\eqref{linear} admits two families of planar waves:
\begin{align}\label{planee}
&\mathcal {W}_{1}(\omega)=\{\omega \psi_1(c_1t+\omega\cdot x): \psi_1~\text{is a scalar function}\},\\\label{plane}
&\mathcal {W}_{2}(\omega)=\{\omega^{\perp} \psi_2(c_2t+\omega\cdot x): \psi_2~\text{is a scalar function}\}.
\end{align}
So the set of plane waves:
\begin{align}
\mathscr{P}=\{\mathcal {W}_{1}(\omega), \mathcal {W}_{2}(\omega)\}.
\end{align}
By Definition \ref{DEFF}, for nonlinear elastic waves in 2-D, the first null condition is
\begin{align}\label{nul1}
l_{3}(\nabla u)=0,~~\forall ~u\in \mathscr{P},
\end{align}
where~$l_3(\nabla u)$ is given by~\eqref{sanjie};
and the second null condition is
\begin{align}\label{nul2}
l_{4}(\nabla u)=0,~~\forall~ u\in \mathscr{P},
\end{align}
where~$l_4(\nabla u)$ is defined in~\eqref{sijie}.
It is easy to verify that~\eqref{nul1} is equivalent to
 \begin{align}
 \frac{3}{2}d_1=3\sigma_{11}+2\sigma_{111}=0;
 \end{align}
and \eqref{nul2} is equivalent to
\begin{align}
6e_1&=3\sigma_{11}+12\sigma_{111}+4\sigma_{1111}=0,\\
2e_2&=\sigma_{11}-2\sigma_{12}+\sigma_{22}=0.
\end{align}
In the later of this manuscript, for nonlinear elastic waves in 2-D, we call that $d_1=0$ is the first null condition and $e_1=e_2=0$ is the second null condition . \par
\begin{rem}
Corresponding to Remark~\ref{remmm}, we can show that~the first null condition $d_1=0$ is equivalent to
\begin{align} \label{null1100}
B^{ijk}_{lmn}\omega_i\omega_j\omega_k\omega_l\omega_m\omega_n&=0,~~\forall ~\omega\in S^{1};
\end{align}
for the second null condition,~$e_1=0$ is equivalent to
\begin{align}\label{nulln1101}
B^{ijkp}_{lmnq}\omega_i\omega_j\omega_k\omega_p\omega_l\omega_m\omega_n\omega_q&=0,~~\forall ~\omega\in S^{1},
\end{align}
and~$e_2=0$ is equivalent to
\begin{align}\label{nulln2222}
B^{ijkp}_{lmnq}\omega^{\perp}_i\omega^{\perp}_j\omega^{\perp}_k\omega^{\perp}_p\omega_l\omega_m\omega_n\omega_q&=0,~~\forall ~\omega\in S^{1}.
\end{align}
The proof of these equivalence is given in Peng and~Zha \cite{Zha} following the 3-D analogue in Sideris \cite{Sideris00}. In \cite{Zha}, in various situations, we get the lifespan of classical solutions for nonlinear elastic waves when the radial symmetry of initial data is not assumed.

\end{rem}
\section{Main theorem and its proof}
In this section, for the Cauchy problem of nonlinear elastic wave equations in 2-D, we will show that under the first null condition $d_1=0$ and the second null condition $e_1=e_2=0$, global existence of smooth solutions with small and radial symmetric initial data can be obtained. The key observation in the proof is that in the radial symmetric case, there exists only the pressure waves. Then the nonlinear elastic waves  reduces to a quasilinear wave system with single wave speed $c_1$, and the corresponding null condition can be deduced from the one of nonlinear elastic waves. So we can apply the global existence result of 2-D quasilinear wave equations with null condition in Alinhac \cite{Alinhac01}(see also alternative proofs in Hoshiga \cite{MR2297944} and Zha \cite{zha123}).
 \par
 For the convenience of applications in the later, we first introduce Alinhac's result.
 In~\cite{Alinhac01}, for the Cauchy problem of 2-D quasilinear wave equations with first and second null conditions,  Alinhac proved the global existence of smooth solutions by the so called~\lq\lq ghost weight" energy estimates.
This result can be extended parallel to quasilinear wave systems with single wave speed.
We describe this result in the case of the nonlinearity contains only spatial derivatives and be of divergence form just corresponding to the situation in our application.
\par
Consider the Cauchy problem of 2-D quasilinear wave systems:
  \begin{align}\label{quasilinear2}
&\partial_t^2 v^i-c^2\Delta v^{i}=g^{ijk}_{lmn}\partial_{l}(\partial_{m} v^{j}\partial_{n} v^{k})+h^{ijkp}_{lmnq}\partial_{l}(\partial_{m}v^{j}\partial_{n}v^{k}\partial_{q} v^{p}),\\\label{quasi}
&t=0:~v^i=\varepsilon f^{i}, v_t^i=\varepsilon g^{i},~1\leq i\leq m.
\end{align}
Assume that the coefficients in the nonlinearity satisfy the following symmetry conditions:
\begin{align}\label{sym1}
g^{ijk}_{lmn}=g^{jik}_{mln}=g^{kji}_{nml},~~h^{ijkp}_{lmnq}=h^{jikp}_{mlnq}=h^{kjip}_{nmlq}=h^{pjki}_{qmnl}.
\end{align}
We call that~\eqref{quasilinear2} satisfies the first null condition, if
\begin{align}\label{null1}
g^{ijk}_{lmn}\omega_l\omega_m\omega_n=0,~\forall~1\leq i,j,k\leq m, \omega\in S^1;
\end{align}
and \eqref{quasilinear2} satisfies the second null condition, if
\begin{align}\label{null222}
h^{ijkp}_{lmnq}\omega_l\omega_m\omega_n\omega_q=0,~\forall~1\leq i,j,k,p\leq m, \omega\in S^1.
\end{align}
\par
The following theorem is obtained implicitly in Alinhac \cite{Alinhac01}.
\begin{thm}\label{thm41}
Consider the Cauchy problem~\eqref{quasilinear2}--\eqref{quasi}.
Assume that~\eqref{quasilinear2} satisfies the first null condition~\eqref{null1} and
the second null condition~\eqref{null222}, and the initial data~$f,g$ is smooth and has compact support. Then for any given positive parameter $\varepsilon$ small enough,~\eqref{quasilinear2}--\eqref{quasi} admits a unique global smooth solution.
\end{thm}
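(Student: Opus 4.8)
The plan is to show that Alinhac's proof of global existence for the scalar $2$-D quasilinear wave equation with first and second null conditions in \cite{Alinhac01} carries over verbatim to the system \eqref{quasilinear2}, the only structural input being that all components propagate at the common speed $c$. First I would normalize $c=1$ by rescaling the time variable, and introduce the Klainerman vector fields adapted to $\Box=\partial_t^2-\Delta$: the translations $\partial_\alpha$, the scaling $S=t\partial_t+x\cdot\nabla$, the rotation $\Omega=x_1\partial_2-x_2\partial_1$, and the Lorentz boosts $L_i=t\partial_i+x_i\partial_t$, collectively $\Gamma=(\Gamma_1,\dots,\Gamma_7)$. Since \eqref{quasilinear2} has a single wave speed and the nonlinearity involves only spatial derivatives, each $\Gamma_k$ commutes with $\Box$ up to the usual structure constants \emph{componentwise}, so no modified or rescaled rotation fields are needed; hence for every multi-index $a$ the quantity $\Box(\Gamma^a v^i)$ is a linear combination of terms $g^{ijk}_{lmn}\partial_l(\partial_m\Gamma^b v^j\,\partial_n\Gamma^c v^k)$ with $|b|+|c|\le|a|$ and of analogous cubic terms, all still obeying the symmetry \eqref{sym1} and the null conditions \eqref{null1}, \eqref{null222}.

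The core is the ghost-weight energy inequality. With $\sigma=r-t$, I would fix the bounded increasing weight $q(\sigma)=\int_{-\infty}^{\sigma}(1+s^2)^{-1}\,ds$, multiply the equation for $w=\Gamma^a v^i$ by $e^{q}\,\partial_t w$, and integrate over $[0,t]\times\mathbb{R}^2$. As in \cite{Alinhac01}, besides the standard energy this produces the \emph{positive} spacetime quantity
\begin{align}
\int_0^t\!\!\int_{\mathbb{R}^2}\frac{e^{q}}{(1+|r-s|)^2}\,\sum_i|G\,\Gamma^a v^i|^2\,dx\,ds,
\end{align}
where $G$ denotes the good derivatives tangent to the light cone (e.g.\ $\partial_t+\partial_r$ and $(1+r)^{-1}\Omega$), which gain an extra power of $(1+|r-t|)^{-1}$. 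Summing over $|a|\le N$ for a fixed large $N$ defines the high-order energy $E_N(t)$ and the accumulated ghost term $X_N(t)^2$, and I would run a continuity/bootstrap argument with $E_N(t)\le A\varepsilon^2(1+t)^{A\varepsilon}$ on $[0,T)$. The Klainerman--Sobolev inequality then yields the pointwise decay $|\partial\Gamma^a v(t,x)|\lesssim \varepsilon (1+t)^{-1/2}(1+|r-t|)^{-1/2}(1+t)^{A\varepsilon}$ for $|a|\le N-2$, exactly as in the scalar $2$-D case.

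It remains to exploit the null conditions. The first null condition \eqref{null1} says precisely that $g^{ijk}_{lmn}\omega_l\omega_m\omega_n=0$ for every $\omega\in S^1$, the classical criterion ensuring that when the derivatives in $g^{ijk}_{lmn}\partial_l(\partial_m v^j\partial_n v^k)$ are all replaced by their components along the bad null direction the result vanishes; consequently that quadratic nonlinearity can be rewritten as a finite sum of terms each carrying at least one good-derivative factor $G v$, and \eqref{null222} does the same for the cubic nonlinearity. After commuting with $\Gamma^a$ and using the symmetry \eqref{sym1} to keep the top-order factor in the energy while transferring derivatives onto lower-order factors (integration by parts being available since the nonlinearity is in divergence form), every nonlinear contribution to the energy identity for $\Gamma^a v^i$ is a product in which some factor is a good derivative of a $\Gamma^b v$ with $|b|\le\lceil N/2\rceil$; by the decay estimate that factor is $\lesssim\varepsilon(1+t)^{-1}(1+|r-t|)^{-1}(1+t)^{A\varepsilon}$, while the remaining factors are kept in $L^2$, with an additional $L^\infty$ factor supplying the extra $(1+t)^{-1/2}$ that the cubic case needs in two dimensions. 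Pairing such a product against $\partial_t\Gamma^a v^i$ by Cauchy--Schwarz relative to the weight $e^{q}(1+|r-t|)^{-2}$, one absorbs it into the ghost term $X_N$ rather than generating a non-integrable power of $t$; the remainder satisfies $E_N(t)+X_N(t)^2\lesssim\varepsilon^2+\varepsilon\int_0^t(1+s)^{-1+A\varepsilon}E_N(s)\,ds$, so Gr\"onwall improves the bootstrap once $\varepsilon$ is small. Together with the standard local existence theorem and the continuation criterion (a smooth solution persists as long as $E_N$ and $\|\partial v\|_{L^\infty}$ stay finite), this gives the global smooth solution.

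The main obstacle is bookkeeping rather than a new idea — which is exactly why the theorem is only \emph{implicit} in \cite{Alinhac01}. One must verify that the vanishing of the \emph{contracted} symbols $g^{ijk}_{lmn}\omega_l\omega_m\omega_n$ and $h^{ijkp}_{lmnq}\omega_l\omega_m\omega_n\omega_q$ for all $\omega\in S^1$ is precisely the condition eliminating every purely-bad monomial \emph{uniformly in the component indices} $i,j,k,p$, and that the symmetry \eqref{sym1} is what allows the bad derivative to be moved off the highest-order factor in the top-order commutator terms, where that factor carries $\sim N$ vector fields and therefore cannot be placed in $L^\infty$. Granting these two points, Alinhac's scalar estimates apply line by line: the linear operator is diagonal in the components, and the energy, decay and ghost-weight machinery are insensitive to the (finite) number of unknowns.
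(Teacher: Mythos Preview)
The paper does not supply its own proof of this theorem: immediately before the statement it says ``The following theorem is obtained implicitly in Alinhac \cite{Alinhac01}'' (with alternative proofs cited to Hoshiga \cite{MR2297944} and Zha \cite{zha123}), and then moves straight to Theorem~\ref{main}. So there is nothing to compare your argument against except that bare citation.

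Your sketch is a faithful and essentially correct expansion of what ``implicitly in \cite{Alinhac01}'' means here: normalize the speed, commute the Klainerman fields (which is unproblematic because the system is diagonal with a single speed), run Alinhac's ghost-weight energy, use \eqref{null1}--\eqref{null222} to guarantee a good-derivative factor in every quadratic and cubic monomial, and use the symmetry \eqref{sym1} to integrate by parts the top-order factor. You even flag the two places where the passage from scalar to system requires a word (the null conditions must hold uniformly in the component indices, and the symmetry is what lets you move the derivative off the highest-order factor). That is exactly the content the paper is outsourcing to \cite{Alinhac01}, so your proposal and the paper's ``proof'' are the same in spirit; you have simply written out what the paper leaves as a reference.
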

Now consider the Cauchy problem of 2-D nonlinear elastic waves:
\begin{align}\label{elast1}
&\partial^2_t u-c_2^2 \triangle u-(c_1^2-c_2^2)\nabla \nabla\cdot u =N_2(\nabla u, \nabla^2 u)+N_3(\nabla u, \nabla^2 u),\\\label{elast2}
&t=0:~u=\varepsilon f, u_t=\varepsilon g,
\end{align}
where the nonlinearity is given by~\eqref{N2}--\eqref{N4}. We have
\begin{thm}\label{main}
(Main theorem)Consider the Cauchy problem~\eqref{elast1}--\eqref{elast2}.  Assume that~\eqref{elast1} satisfies the first null condition~~$d_1=0$ and
the second null condition~$e_1=e_2=0$, and the initial data~$f,g$ is radial symmetric and smooth and has compact support. Then for any given positive parameter $\varepsilon$ small enough,~\eqref{elast1}--\eqref{elast2} admits a unique global smooth solution.
\end{thm}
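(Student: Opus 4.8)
The plan is to reduce, for radially symmetric data, the Cauchy problem \eqref{elast1}--\eqref{elast2} to the Cauchy problem for a quasilinear wave \emph{system} with the single wave speed $c_1$ that fits the hypotheses of Theorem~\ref{thm41}, and then to quote that theorem.

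First I would check that the radial symmetry propagates. Since the material is isotropic, \eqref{elast1} is invariant under $u(t,x)\mapsto Q^{\top}u(t,Qx)$ for every $Q\in O(2)$; hence, by the standard local existence and uniqueness theory for the second order quasilinear system \eqref{elast1} (whose linearization \eqref{linear} has simple characteristic sheets with speeds $\pm c_1,\pm c_2$), a local smooth solution issuing from $O(2)$-equivariant data is itself $O(2)$-equivariant, so that $u(t,x)=V(t,|x|)\,x/|x|$. In particular $u$ is curl free, $\nabla^{\bot}\cdot u\equiv0$, and the identity $\Delta u=\nabla(\nabla\cdot u)-\nabla^{\bot}(\nabla^{\bot}\cdot u)$ gives, on curl free fields,
\[
\partial_t^2 u-c_2^2\Delta u-(c_1^2-c_2^2)\nabla\nabla\cdot u=\partial_t^2 u-c_1^2\Delta u=:\Box_{c_1}u ,
\]
i.e.\ the linear part of \eqref{elast1} collapses to the single-speed wave operator $\Box_{c_1}$ acting componentwise; this is the precise content of the assertion that in the radially symmetric case only the pressure wave is present.

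Next I would simplify the nonlinearity on such fields. Setting $\nabla^{\bot}\cdot u\equiv0$ in \eqref{N2}--\eqref{N4} annihilates every term carrying a factor $\nabla^{\bot}\cdot u$ — the $d_2$ term of \eqref{N2}, the $e_2$ and $e_3$ terms of \eqref{N3}, and the two $e_6$ terms of $\widetilde Q$ in \eqref{N4} — while the first null condition $d_1=0$ removes $3d_1\nabla(\nabla\cdot u)^2$ and the second null condition $e_1=0$ removes $4e_1\nabla(\nabla\cdot u)^3$. What is left is $Q(u,\nabla u)$ from \eqref{N2} together with the $e_4$ and $e_5$ terms of $\widetilde Q(u,\nabla u)$; call this surviving expression $\widetilde N(\nabla u,\nabla^2 u)$. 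Every summand of $\widetilde N$ carries at least one null form $Q_{12}$, and, using only $\partial_l\partial_m=\partial_m\partial_l$, each can be put in the divergence form $\partial_l(\partial_m v^j\partial_n v^k)$ for the quadratic part and $\partial_l(\partial_m v^j\partial_n v^k\partial_q v^p)$ for the cubic part; after the usual symmetrization over the index triples and quadruples (which changes neither the equation nor the null symbols) the resulting coefficients $g^{ijk}_{lmn}$, $h^{ijkp}_{lmnq}$ obey \eqref{sym1}, and because of the antisymmetric factor coming from $Q_{12}$ the contractions $g^{ijk}_{lmn}\omega_l\omega_m\omega_n$ and $h^{ijkp}_{lmnq}\omega_l\omega_m\omega_n\omega_q$ vanish for all $\omega\in S^1$, i.e.\ \eqref{null1} and \eqref{null222} hold. (Only $d_1=0$ and $e_1=0$ are actually used in this step; the hypothesis $e_2=0$ is not needed for the radial reduction, since the $e_2$ term already contains $\nabla^{\bot}\cdot u$.)

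Finally I would assemble the argument. Consider the auxiliary Cauchy problem $\Box_{c_1}u=\widetilde N(\nabla u,\nabla^2 u)$, $u(0)=\varepsilon f$, $u_t(0)=\varepsilon g$. By the two previous steps it is of the form \eqref{quasilinear2}--\eqref{quasi} with $m=2$ and $c=c_1$, it satisfies \eqref{sym1}, \eqref{null1}, \eqref{null222}, and its data is smooth with compact support; so Theorem~\ref{thm41} provides, for $\varepsilon$ small, a unique global smooth solution $u$. The auxiliary operator is again $O(2)$-covariant (each of its terms being built from $\nabla$, $\nabla\cdot$ and $Q_{12}$), so by uniqueness this $u$ is $O(2)$-equivariant, hence curl free; on curl free fields the auxiliary equation coincides with \eqref{elast1}, so $u$ is a global smooth solution of \eqref{elast1}--\eqref{elast2}, and by local uniqueness it agrees with the local solution, which is thereby shown to be global. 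I expect the only real work to be the bookkeeping in the third step: one must verify that, once the shear (speed-$c_2$) directions are discarded, every term surviving the two null conditions $d_1=0$ and $e_1=0$ genuinely carries a null form, so that the reduced system meets the (a priori stronger-looking) null conditions \eqref{null1}--\eqref{null222} demanded by Theorem~\ref{thm41}, rather than only the scalar null condition on $B^{ijk}_{lmn}$ that $d_1=0$ directly encodes.
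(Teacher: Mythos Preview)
Your proposal is correct and essentially identical to the paper's proof: propagate radial symmetry to get $\nabla^{\bot}\cdot u\equiv 0$, collapse the linear part to the single-speed operator $\partial_t^2-c_1^2\Delta$, use $d_1=0$ and $e_1=0$ together with $\nabla^{\bot}\cdot u=0$ to reduce the nonlinearity to terms each carrying a $Q_{12}$ factor so that \eqref{null1}--\eqref{null222} hold, and then invoke Theorem~\ref{thm41}. Your final assembly step---checking rotation-covariance of the auxiliary system so that its global solution is again equivariant, hence curl-free, hence a solution of \eqref{elast1}---is slightly more careful than the paper (which leaves this point implicit), and your observation that $e_2=0$ is not actually used agrees with the paper's own remark following the proof.
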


\begin{proof}
For the~Cauchy problem~\eqref{elast1}--\eqref{elast2}, by the radial symmetry of initial data~$f,g$, the rotation invariance of the elastic wave equations and the uniqueness of smooth solutions of the Cauchy problem~\eqref{elast1}--\eqref{elast2}, the solution $u$ is also radial symmetric. So~we can write $u$ as
\begin{align}\label{biao}
u(t,x)=x \psi(t,r),~r=|x|,
\end{align}
where $\psi$ is a scalar function. By~\eqref{biao}, it is easy to see that
\begin{align}\label{curl}
(\nabla^{\bot}\cdot u)(t,x)=0.
\end{align}
It follows from Hodge decomposition and \eqref{curl} that
\begin{align}\label{div}
\Delta u=\nabla \nabla\cdot u+\nabla^{\bot}\nabla^{\bot}\cdot u=\nabla \nabla\cdot u.
\end{align}
Thanks to \eqref{div}, for the linear part of \eqref{elast1}, we have
\begin{align}\label{curl11}
\partial^2_t u-c_2^2 \triangle u-(c_1^2-c_2^2)\nabla \nabla\cdot u=\partial^2_t u-c_1^2 \triangle u.
\end{align}
Next we consider the quadratically nonlinear term $N_2(\nabla u,\nabla^2 u)$. Inserting the first null condition $d_1=0$ and~\eqref{curl} into the expressions \eqref{N2} gives
\begin{align}\label{n22}
N_2(\nabla u,\nabla^2 u)=\big(N^{(1)}_2(\nabla u,\nabla^2 u),N^{(2)}_2(\nabla u,\nabla^2 u) \big),
\end{align}
where
\begin{align}\label{n211}
N^{(1)}_2(\nabla u,\nabla^2 u)
=d_3\partial_1\big(Q_{12}( u^{1}, u^{2})\big)+d_3Q_{12}(\nabla\cdot u, u^{2}),
\end{align}
\begin{align}\label{n212}
N^{(2)}_2(\nabla u,\nabla^2 u)
=d_3\partial_2\big(Q_{12}( u^{1}, u^{2})\big)+d_3Q_{12}(u^{1}, \nabla\cdot u).
\end{align}
At last we consider the cubically nonlinear term $N_3(\nabla u,\nabla^2 u)$.
 If we plug the second null condition~$e_1=0$ and~\eqref{curl} back into the expression \eqref{N3}, we obtain
 \begin{align}\label{n23}
N_3(\nabla u,\nabla^2 u)=\big(N^{(1)}_3(\nabla u,\nabla^2 u),N^{(2)}_3(\nabla u,\nabla^2 u) \big),
\end{align}
where
\begin{align}\label{n311}
N^{(1)}_3(\nabla u,\nabla^2 u)&=2e_4Q_{12}\big(Q_{12}(u^1,u^2), u^2\big)+2e_5\partial_1\big((\nabla\cdot u)Q_{12}(u^1,u^2)\big)\nonumber\\
&+e_5Q_{12}\big((\nabla\cdot u)^2, u^{2}\big),
\end{align}
\begin{align}\label{n312}
N^{(2)}_3(\nabla u,\nabla^2 u)&=2e_4Q_{12}\big(u^1,Q_{12}(u^1,u^2)\big)+2e_5\partial_2\big((\nabla\cdot u)Q_{12}(u^1,u^2)\big)\nonumber\\
&+e_5Q_{12}\big(u^{1},(\nabla\cdot u)^2 \big).
\end{align}
According to~\eqref{curl11}--\eqref{n23}, we see that the~Cauchy problem~\eqref{elast1}--\eqref{elast2} reduces to
\begin{align}\label{reduce}
&\partial^2_t u^i-c_1^2 \triangle u^i=\widetilde{g}^{ijk}_{lmn}\partial_{l}(\partial_{m} u^{j}\partial_{n} u^{k})+\widetilde{h}^{ijkp}_{lmnq}\partial_{l}(\partial_{m}u^{j}\partial_{n}u^{k}\partial_{q} u^{p}),\\\label{reduce2}
&t=0:~u^i=\varepsilon f^i, u^i_t=\varepsilon g^i,~1\leq i\leq 2,
\end{align}
where
\begin{align}
\widetilde{g}^{ijk}_{lmn}\partial_{l}(\partial_{m} u^{j}\partial_{n} u^{k})&=N^{(i)}_2(\nabla u,\nabla^2 u),~1\leq i\leq 2,\\
\widetilde{h}^{ijkp}_{lmnq}\partial_{l}(\partial_{m}u^{j}\partial_{n}u^{k}\partial_{q} u^{p})&=N^{(i)}_3(\nabla u,\nabla^2 u),~1\leq i\leq 2.
\end{align}
It is to easy to verify that $\{\widetilde{g}^{ijk}_{lmn}\}, \{\widetilde{h}^{ijkp}_{lmnq}\}$ satisfies the symmetry conditions \eqref{sym1}. \footnote{ The symmetry
can be obtained by \eqref{sym1232} directly.}
Noting that all nonlinear terms contain the null form $Q_{12}$. It follows from
\eqref{n211} and \eqref{n212} that
\begin{align}
\widetilde{g}^{ijk}_{lmn}\omega_l\omega_m\omega_n=0,~\forall~1\leq i,j,k\leq 2, \omega\in S^1.
\end{align}
So~\eqref{reduce} satisfies the first null condition~\eqref{null1}.
According to~\eqref{n311} and \eqref{n312}, we can get
\begin{align}\label{null2}
\widetilde{h}^{ijkp}_{lmnq}\omega_l\omega_m\omega_n\omega_q=0,~\forall~1\leq i,j,k,p\leq 2, \omega\in S^1.
\end{align}
So~\eqref{reduce} satisfies the second null condition~\eqref{null222}. Then Theorem \ref{main} is a corollary of Theorem \ref{thm41}.
\end{proof}
\begin{rem}
In view of the expression \eqref{N3} and~\eqref{curl}, we know that the assumption~$e_2=0$ in Theorem 4.2 is not necessary.
\end{rem}
\section{Discussion}
Some remarks are given as follows.
\begin{rem}
In \cite{Alinhac01}, for 2-D quasilinear wave equations, Alinhac proved that if only the first null condition is satisfied,
then the smooth solution's lifespan~$T_{\varepsilon}\geq \exp(\frac{c}{\varepsilon^2})$, where~$c$ is a constant independent of~$\varepsilon$.
So for the Cauchy problem~\eqref{elast1}--\eqref{elast2},
if~\eqref{elast1} only satisfies the first null condition $d_1=0$ and the initial data is radial symmetric, then it admits the same lifespan estimate, which can be proved by the same method as employed in Theorem \ref{main}.
\end{rem}

\begin{rem}
In the radial symmetric case, there exists only the pressure wave. In the opposite side, when the material is incompressible, there exists only  the shear wave. For 3-D incompressible materials, the global existence of smooth solutions with small data was showed in~Sideris and~Thomases \cite{Sideris05, Sideris07}(see also an alternative proof in Lei and Wang \cite{MR3317810}). For the 2-D incompressible and Hookean type materials,
Lei \cite{Lei14} proved the global existence of smooth solutions with small data by the vector fields method in the Lagrangian coordinates formulation(see also previous almost global existence result in Lei, Sideris and Zhou \cite{MR3391913}  in the Euler coordinates formulation). Then Wang \cite{Wang14} also established the global existence result by different approach and from the point of view
in frequency space in the Euler coordinates formulation.
\end{rem}
\begin{rem}
For the Cauchy problem of 2-D nonlinear elastic waves \eqref{elast1}--\eqref{elast2}, if the initial data $f,g$ is not radial symmetric, the global existence of smooth solutions under the first null condition ~$d_1=0$  and the second null condition $e_1=e_2=0$ is still open. The main difficulty lies in the control of the nonlinear
interaction of fast pressure wave and slow shear wave at the quadratic level.
\end{rem}
\section*{Acknowledgements}
The author would like to express his sincere gratitude to Prof. Zhen Lei and Prof. Yi Zhou for their helpful suggestions and encouragements.

%
%

\end{document}